\documentclass[12pt,reqno]{amsart}

\usepackage{amsmath}
\usepackage{amsfonts}
\usepackage{amsthm}
\usepackage{amstext}
\usepackage{amssymb}
\usepackage{txfonts}
\usepackage{enumerate}
\usepackage{graphicx,color}
\usepackage{setspace}
\usepackage{epsfig}
\usepackage{color}

\usepackage{hyperref}
\usepackage{verbatim}
%% \usepackage[none]{hyphenat}

 %% median operator that changes size correctly
 %% median operator that changes size correctly

\theoremstyle{plain}
 %% [section]
\newtheorem{theorem}{Theorem}

\theoremstyle{definition}
\newtheorem{definition}{Definition}
\newtheorem{remark}{Remark}
\newtheorem{exm}{Example}

\begin{document}

\title[Monotonicity]{Cone Monotonicity: structure theorem, properties,
and comparisons to other notions of Monotonicity}

\author[H. A. Van Dyke]{Heather A. Van Dyke}
\email{hvandyke@math.wsu.edu}
\author[K. R. Vixie]{Kevin R. Vixie}
\email{vixie@speakeasy.net}
\author[T. J. Asaki]{Thomas J. Asaki}
\email{tasaki@wsu.edu}
\address{Department of Mathematics \\
Washington State University \\
Pullman, WA 99164-3113}

\date{\today}
\begin{abstract}
In search of a meaningful 2-dimensional analog to monotonicity, we introduce two new definitions and give examples of and discuss the relationship between these definitions and others that we found in the literature.

Note: After we published the article in \emph{Abstract and
Applied Analysis} and after we searched multiple times
for previous work, we discovered that Clarke at al. had
introduced the definition of cone monotonicity and given
a characterization. See the addendum at the end of this
paper for full reference information.
\end{abstract}

\maketitle

%%%%%%%%%%%%%%%%%%%%%%%%%%%%%%%%%%%%%%%%%%%%%%%
%%%Introduction
%%%%%%%%%%%%%%%%%%%%%%%%%%%%%%%%%%%%%%%%%%%%%%%
\begin{section}{Introduction}
Though monotonicity for functions from $\mathbb{R}$ to $\mathbb{R}$ is familiar in even the most elementary courses in mathematics, there are a variety of definitions in the case of functions from $\mathbb{R}^n$ to $\mathbb{R}$.  In this paper we review the definitions we found in the literature and suggest a new definition (with its variants) which we find useful.
 
In Section \ref{old}, we introduce the definitions from the literature for $n$ dimensional monotone functions ($n\geq 2$) .  We give examples and discuss the relationship between these definitions.  In Section \ref{New}, we introduce a new definition of monotonicity\footnote{After we published the article in \emph{Abstract and
Applied Analysis} and after we searched multiple times
for previous work, we discovered that Clarke at al. had
introduced the definition of cone monotonicity and given
a characterization. See the addendum at the end of this
paper for full reference information.} and some of its variants.  We then give examples and explore the characteristics of these new definitions.
\end{section}

%%%%%%%%%%%%%%%%%%%%%%%%%%%%%%%%%%%%%%%%%%%%%%%
%%%Definitions and Examples
%%%%%%%%%%%%%%%%%%%%%%%%%%%%%%%%%%%%%%%%%%%%%%%
\begin{section}{Definitions and Examples}\label{old}
In \cite{Lebesgue1907}, Lebesgue notes that on any interval in $\mathbb{R}$ a monotonic function $f$ attains its maximum and minimum at the endpoints of this interval.  This is the motivation he uses to define monotonic functions on an open bounded domain, $\Omega\subset\mathbb{R}^2$.  His definition requires that these functions must attain their maximum and minimum values on the boundaries of the closed subsets of $\Omega$.  We state the definition found in \cite{Lebesgue1907} below.
\begin{definition}[Lebesgue]\label{LebMono}
Let $\Omega$ be an open bounded domain.  A continuous function $f:\Omega\subset\mathbb{R}^2\rightarrow\mathbb{R}$ is said to be Lebesgue monotone if in every closed domain, $\Omega^\prime\subseteq\Omega$, $f$ attains its maximum and minimum values on $\partial\Omega^\prime$.
\end{definition}
\begin{remark}\label{LebNoLocEx}
This definition tells us that a nonconstant function $f$ is Lebesgue monotone if and only if no level set of $f$ is a local extrema.
\end{remark}
\begin{remark}  Notice also that we can extend this definition to a function $f:\Omega\subset\mathbb{R}^n\rightarrow\mathbb{R}$.
\end{remark}
We now give a couple examples of functions that are Lebesgue monotonic.  

\begin{exm} Since an $n$ dimensional plane, $f(x)=c^Tx+x_0$, can only take on extreme values on the boundary of any closed set in its domain, we know that it is Lebesgue Monotone.
%It is easily seen that a function whose graph is an $n$ dimensional plane (i.e. $f(x)=c^Tx+x_0$) is Lebesgue monotone since it does not have any local extrema, it can only take on extreme values on the boundary of any closed set in its domain. 
\end{exm}

\begin{exm}\label{2Dnonmonocubic}
Let $\Omega=R(x,L)$ be the square of side length $L$, centered at a point $x\in\mathbb{R}^n$, for some $L>0$.  Any function of $n$ real variables whose level sets are lines is Lebesgue monotone.  For example, let $f(x,y)=x^3-x$ (see Figure \ref{cubicMono}).  
\begin{figure}[!htbp]
\centering
\includegraphics{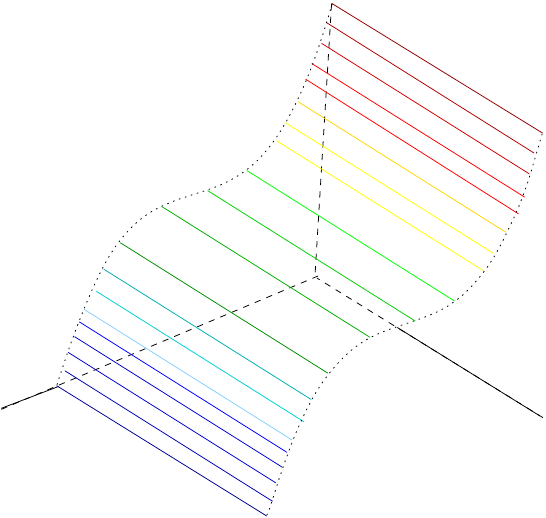}
\caption{$f(x,y)=x^3-x$}\label{cubicMono}
\end{figure}
Because the function is constant in the $y$ direction, we see that on the boundary of any closed subset of $\Omega$, $f$ must take on all the same values as it takes in the interior.  Of course, the choice of $\Omega$ is somewhat arbitrary here (it need only be bounded).
\end{exm}

We now move on to another definition given in \cite{Mostow1968}.  Here Mostow,  gives the following definition for monotone functions.
\begin{definition}[Mostow]\label{MosMono}
Let $\Omega$ be an open set in a locally connected topological space and let $f$ be a continuous function on $\overline{\Omega}$.  The function $f$ is called Mostow monotone on $\Omega$ if for every connected open subset $U\subset \Omega$ with $U\neq\overline{U}$, 
\begin{equation*}
\sup_{x\in U} f(x)\leq\sup_{y\in\partial U}f(y)\quad\mbox{and}\quad\inf_{x\in U}f(x)\geq\inf_{y\in\partial U}f(y). 
\end{equation*}
\end{definition}
We see that if $\Omega=\mathbb{R}^2$ then we can choose a closed disk, $D_r=D(0,r)$ centered at the origin with radius $r$ so that $U=\mathbb{R}^2\setminus D_r$.  On $\partial U=\partial D_r$ a function, $f$, that is Mostow monotone must obtain both its maximum and its minimum.  But, we can let $r\searrow 0$.  In doing this, we see that the maximum and minimum of $f$ can be arbitrarily close.  This tells us that if $f$ is Mostow Monotone, then it must be a constant function.  In \cite{Mostow1968}, Mostow states that one can adjust this definition by requiring the function to take on its maximum or minimum on $\partial U$ only for relatively compact open sets.

%%%%%%%%%%%%%%%%%%%%%%%%%%%%%%%%%%%%%%%%%%%%
% M-Monotone ===> L-Monotone
%%%%%%%%%%%%%%%%%%%%%%%%%%%%%%%%%%%%%%%%%%%%
\begin{exm}
It is not true that Lebesgue monotone functions are Mostow monotone (even if we follow the suggestion in \cite{Mostow1968} to adjust the definition of Mostow monotone).  To see this, we consider a function $f:\Omega\subset\mathbb{R}^2\rightarrow\mathbb{R}$ that is affine and has its gradient oriented along the domain as in Figure \ref{LebNotMost}.  Here $f$ will have supremum and infimum that are not attained on the boundary of the open set $U$. 
\begin{figure}[htbp!]
\centering
\input{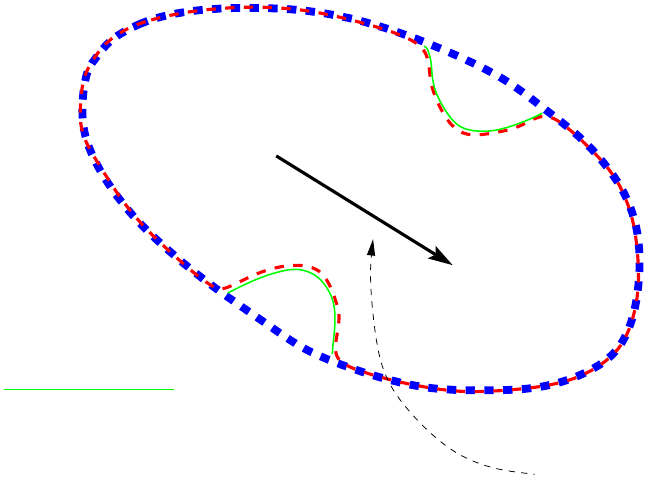_t}
\caption{Example of a function which is Lebesgue monotone, but not Mostow monotone}\label{LebNotMost}
\end{figure}
\end{exm}
\begin{remark}\label{MimpliesL}
Notice, if $\Omega\subset\mathbb{R}^2$ is a bounded domain then any continuous, Mostow monotone function is also Lebesgue monotone.  This is true whether or not we are adjusting the definition as suggested in \cite{Mostow1968}.
\end{remark}

Before giving the next definition, we give some notation for clarity.  Let $\Omega\subseteq\mathbb{R}^2$ be an open domain, $B(x,r)$ be the closed ball of radius $r$ around the point $x\in \Omega$, and $S(x,r)$ be the boundary of the ball, $B(x,r)$.  We say a function is $L^1_{loc}(\Omega)$ if $\int_U|u|\;dx<\infty$ for every bounded set $U\subset\Omega$.  For comparison, we write the following definition for a less general function than what can be found in \cite{VG1976}.

\begin{definition}[Vodopyanov, Goldstein]\label{VGMono}
We say an $L^1_{loc}$ function, $f:\Omega\rightarrow \mathbb{R}$ is Vodopyanov Goldstein Monotone at a point $x\in \Omega$ if there exists $0<r(x)\leq \mbox{dist}(x,\partial\Omega)$ so that for almost all $r\in[0,r(x)]$, the set 
\begin{equation*}
f^{-1}\left(f(B(x,r))\cap [\mathbb{R}\setminus f(S(x,r))]\right)\cap B(x,r)
\end{equation*} 
has measure zero.  A function is then said to be Vodopyanov-Goldstein monotone on a domain, $\Omega$ if it is Vodopyanov Goldstein monotone at each point $x\in\Omega$.
\end{definition}

\begin{exm}
If we remove the continuity requirement for both Lebesgue and Mostow monotone functions we can create a function that is Mostow monotone but not Vodopyanov-Goldstein monotone.  For the function in Figure \ref{discontLM}, we see that any closed and bounded set must attain both the maximum and minimum of $f$ on its boundary, but if we take a ball, $B$ that contains the set $\{f=0\}$, we see that $f(S)=\{-1,1\}$. So, $f^{-1}(f(B\cap\mathbb{R}\setminus f(S)))\cap B$ does not have measure zero.  That is, $f$ is not Vodopyanov-Goldstein monotone.
\begin{figure}[!htbp]
\centering
\input{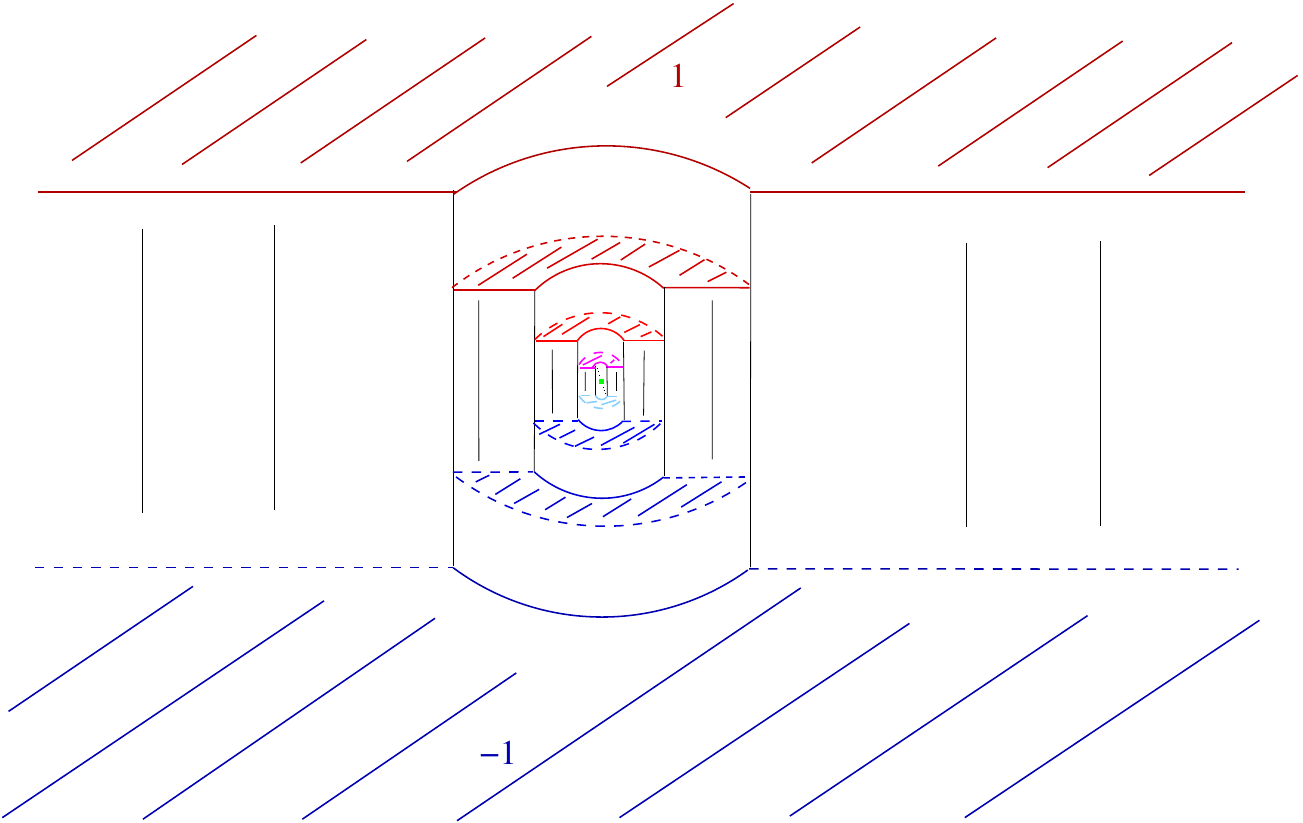_t}
\caption{Function satisfying all but continuity criteria for Mostow Monotone and is not Vodopyanov-Goldstein monotone.}\label{discontLM}
\end{figure}
\end{exm}

\begin{exm}\label{VGNotL}Now, a function can be Vodopyhanov-Goldstein monotone, but not Lebesgue monotone.  An example of such a function is one in which $f$ attains a minimum along a set, $\mathcal{M}$, that is long and narrow relative to the set $\Omega$ (see Figure \ref{VGdniL}).  In this case, the boundary of any ball, $B(x,r)\subset\Omega$, that is centered along this set must intersect the set, $\mathcal{M}$ thus attaining both its maximum and minimum on the boundary of the ball, but the function will not reach its minimum on the boundary of a closed set $\Omega^\prime$ such as the one in Figure \ref{VGdniL}.
\end{exm}

\begin{figure}[!htbp]
\centering
\input{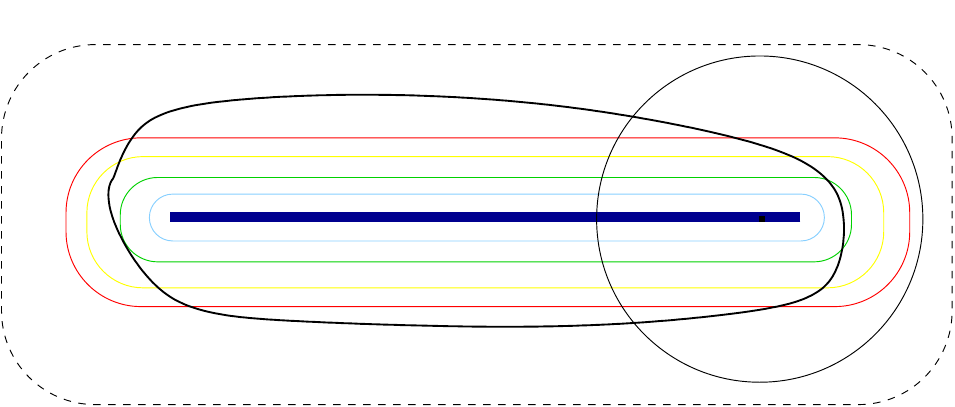_t}
\caption{The level sets of a function that is Vodopyhanov-Goldstein monotone but not Lebesgue monotone.}\label{VGdniL}
\end{figure}

The next theorem shows that, for continuous functions, Lebesgue monotone functions are Vodopyanov-Goldstein monotone.
%%%%%%%%%%%%%%%%%%%%%%%%%%%%%%%%%%%%%%%%%%%%
% VG-Monotone <== L-Monotone
%%%%%%%%%%%%%%%%%%%%%%%%%%%%%%%%%%%%%%%%%%%%
\begin{theorem}\label{LMimpliesVGM}
Let $\Omega\subset\mathbb{R}^2$ be a bounded domain and let $f:\Omega\rightarrow\mathbb{R}$ be continuous.  Then $f$ is Vodopyanov-Goldstein monotone function if $f$ is Lebesgue monotone.
\end{theorem}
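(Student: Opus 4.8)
The plan is to apply the Lebesgue monotonicity hypothesis directly to closed balls $B(x,r)$, which are themselves closed domains, and to observe that the set appearing in the Vodopyanov--Goldstein condition is in fact \emph{empty} for every admissible radius $r$---not merely of measure zero for almost every $r$. This makes the ``almost all $r$'' quantifier in Definition \ref{VGMono} a non-issue, since we obtain the stronger ``all $r$'' statement for free.

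First I would fix an arbitrary point $x \in \Omega$ and set $r(x) = \tfrac{1}{2}\,\mathrm{dist}(x,\partial\Omega)$, so that the closed ball $B(x,r) \subseteq \Omega$ for every $r \in [0, r(x)]$; this choice sidesteps the boundary case $r = \mathrm{dist}(x,\partial\Omega)$, at which the closed ball could meet $\partial\Omega$ and so fail to be a closed subdomain of $\Omega$. Fixing such an $r$, I would regard $B(x,r)$ as a closed domain contained in $\Omega$ with boundary $S(x,r)$, and invoke Lebesgue monotonicity (Definition \ref{LebMono}) to conclude that $f$ attains both $M := \max_{B(x,r)} f$ and $m := \min_{B(x,r)} f$ on the sphere $S(x,r)$.

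The key step is to identify the two image sets. Since $B(x,r)$ is compact and connected and $f$ is continuous, $f(B(x,r))$ is a compact interval, namely $[m,M]$. The sphere $S(x,r)$ is likewise compact and connected---here it matters that we are in $\mathbb{R}^2$, so that $S(x,r)$ is a connected circle---hence $f(S(x,r))$ is also a compact interval $[\min_{S} f,\, \max_{S} f]$; but by the previous step its endpoints coincide with $m$ and $M$, whence $f(S(x,r)) = [m,M] = f(B(x,r))$. Consequently
\[
f(B(x,r)) \cap \bigl[\mathbb{R}\setminus f(S(x,r))\bigr] = [m,M]\cap\bigl(\mathbb{R}\setminus[m,M]\bigr) = \emptyset,
\]
so its preimage under $f$ intersected with $B(x,r)$ is empty and therefore has measure zero.

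Since this holds for every $r \in (0, r(x)]$, it holds in particular for almost every such $r$, so $f$ is Vodopyanov--Goldstein monotone at $x$; as $x \in \Omega$ was arbitrary, $f$ is Vodopyanov--Goldstein monotone on $\Omega$. The proof is essentially immediate once the set-builder expression is unwound, and I do not expect a serious obstacle. The only points demanding care are the correct parsing of that expression and the invocation of connectedness of the sphere $S(x,r)$: without connectedness the continuous image $f(S(x,r))$ could fail to be a single interval, and then equality of the two images (rather than mere equality of their extrema) would no longer follow.
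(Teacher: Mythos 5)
Your proposal is correct and follows essentially the same route as the paper: apply Lebesgue monotonicity to closed balls $B(x,r)$, identify $f(B(x,r))$ and $f(S(x,r))$ as the same interval $[m,M]$, and conclude the set in the Vodopyanov--Goldstein definition is empty, hence of measure zero. If anything, your version is slightly more careful than the paper's, which asserts $f(B(x,r))=(m,M)$ (it should be $[m,M]$) and omits the connectedness-of-the-circle justification for $f(S(x,r))=[m,M]$ that you rightly flag as the point where the argument genuinely uses the topology of the sphere in $\mathbb{R}^2$.
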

\begin{proof}  Suppose $f$ is Lebesgue monotone, then we know that for all closed sets $\Omega^\prime\subset\Omega$, $f$ attains its local extrema on  $\partial\Omega^\prime$.  In particular, if we let $x\in\Omega$, we have that $f$ attains its local extrema on the boundary of $B(x,r)$ for any $r>0$.  Let $M$ and $m$ be such that
\begin{equation*}
M\equiv\sup_{y\in B(x,r)}f(y)\quad\mbox{and}\quad m\equiv\inf_{y\in B(x,r)}f(y).
\end{equation*}
Then we know that $f(B(x,r))=(m,M)$ and $f(S(x,r))=[m,M]$.  So
\begin{eqnarray*}
\mathbb{R}\setminus f(S(x,r))=(-\infty,m)\cup(M,\infty)\\
\Rightarrow\quad f(B(x,r))\cap [(-\infty,m)\cup(M,\infty)]=\emptyset.
\end{eqnarray*}
Thus, 
\begin{equation*}
f^{-1}\left( f(B(x,r))\cap[(-\infty,m)\cup(M,\infty)]\right)=\emptyset.
\end{equation*}
So, the measure of the set
\begin{equation*}
B(x,r)\cap f^{-1}\left( f(B(x,r))\cap[(-\infty,m)\cup(M,\infty)]\right)
\end{equation*}
is zero.  Thus, $f$ is Vodopyanov Goldstein monotone at $x$.  Since $x$ was chosen arbitrarily, $f$ is Vodopyanov Goldstein monotone.
\end{proof}

In \cite{Manfredi1994}, Manfredi gives a definition for weakly monotone functions.
\begin{definition}[Manfredi]\label{WkMono}
Let $\Omega$ be an open set in $\mathbb{R}^n$ and $f:\Omega\rightarrow\mathbb{R}$ be a function in $W^{1,p}_{loc}(\Omega)$.  We say that $u$ is weakly monotone if for every relatively compact subdomain $\Omega^\prime\subset\Omega$ and for every pair of constants $m\leq M$ such that 
\begin{equation*}
(m-f)^+\in W^{1,p}_0(\Omega^\prime)\quad \mbox{and}\quad (f-M)^+\in W^{1,p}_0(\Omega^\prime),
\end{equation*}
we have that 
\begin{equation*}
m\leq f(x)\leq M\quad \mbox{for a.e. }x\in\Omega^\prime.
\end{equation*}
\end{definition}

Manfredi also gives the following example of a function that is weakly monotone, but not continuous (in this case at the origin).
\begin{exm}[Manfredi]
Write $z=re^{i\theta}$ for $z\in\mathbb{R}^2$.  Define $u$ by
\begin{equation*}
f(z)=\left\{\begin{array}{rl}
\theta & \mbox{for } 0\leq \theta\leq\pi/2,\\
\pi/2 & \mbox{for }\pi/2\leq\theta\leq\pi,\\
3\pi/2-\theta& \mbox{for }\pi\leq\theta\leq 3\pi/2,\\
0 & \mbox{for } 3\pi/2\leq\theta\leq 2\pi.
\end{array}\right.
\end{equation*}
\end{exm}
We expect that all the above types of monotone functions should be weakly monotone.  Because this function is not continuous, it does not satisfy the definition of Lebesgue or Mostow montone.

%%%%%%%%%%%%%%%%%%%%%%%%%%%%%%%%%%%%%%%%%%%%
% L-Monotone ==> Weak Monotone
%%%%%%%%%%%%%%%%%%%%%%%%%%%%%%%%%%%%%%%%%%%%
\begin{theorem}\label{LMimpliesWM}
Let $\Omega\subset\mathbb{R}^2$ be a bounded domain and $u:\Omega\rightarrow\mathbb{R}$, if $u$ is Lebesgue monotone, then $u$ is weakly monotone. 
\end{theorem}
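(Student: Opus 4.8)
The plan is to translate the two Sobolev-space hypotheses in the definition of weak monotonicity into pointwise bounds for $u$ on the boundary $\partial\Omega'$, and then to invoke the maximum/minimum principle that is built into Lebesgue monotonicity. Throughout I take as a standing assumption, as the definition of weak monotonicity demands, that $u\in W^{1,p}_{loc}(\Omega)$; since a Lebesgue monotone function is by definition continuous, and since $\Omega'\subset\Omega$ is relatively compact with $\overline{\Omega'}\subset\Omega$ compact, $u$ is continuous on $\overline{\Omega'}$.

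Fix a relatively compact subdomain $\Omega'\subset\Omega$ and constants $m\le M$ with $(m-u)^+\in W^{1,p}_0(\Omega')$ and $(u-M)^+\in W^{1,p}_0(\Omega')$. The first step is to show these memberships force $m\le u\le M$ on $\partial\Omega'$. Consider $w=(u-M)^+$: it is continuous on $\overline{\Omega'}$ (being a continuous function of $u$) and nonnegative. The key claim is that a continuous function lying in $W^{1,p}_0(\Omega')$ must vanish on $\partial\Omega'$, which yields $u\le M$ there; the symmetric argument applied to $(m-u)^+$ gives $u\ge m$ on $\partial\Omega'$.

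Once the boundary bounds are secured, the conclusion is immediate. The set $\overline{\Omega'}$ is a closed domain contained in $\Omega$, so by Lebesgue monotonicity $u$ attains both its maximum and its minimum over $\overline{\Omega'}$ on the boundary, which is contained in $\partial\Omega'$. Since $u\le M$ on $\partial\Omega'$, the maximum is at most $M$, hence $u\le M$ on all of $\overline{\Omega'}$; symmetrically $u\ge m$ there. Therefore $m\le u\le M$ everywhere on $\Omega'$, and in particular for a.e. $x\in\Omega'$, which is exactly the defining condition for weak monotonicity.

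I expect the only genuine obstacle to be the boundary identification in the second step, namely verifying that a continuous function in $W^{1,p}_0(\Omega')$ vanishes on $\partial\Omega'$. This is a trace statement and is delicate for irregular subdomains. I would establish it by approximating $w$ in the $W^{1,p}$ norm by functions $\phi_n\in C_c^\infty(\Omega')$ and invoking the quasicontinuity of Sobolev functions: $w=0$ quasi-everywhere on $\partial\Omega'$, hence on a dense subset of it, and then everywhere by continuity. In the simplest range $p>2$ this becomes transparent, since the Sobolev embedding $W^{1,p}\hookrightarrow C^0$ makes the zero-extension of $w$ a continuous function on $\R^2$ that vanishes off $\overline{\Omega'}$, forcing $w=0$ on $\partial\Omega'$. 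The remaining steps are routine.
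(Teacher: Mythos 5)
Your proposal follows essentially the same route as the paper's proof: use continuity of $u$ together with the memberships $(m-u)^+,(u-M)^+\in W^{1,p}_0(\Omega')$ to conclude $m\le u\le M$ on $\partial\Omega'$, and then invoke the Lebesgue max/min principle on the closed subdomain to propagate these bounds to all of $\Omega'$. The only difference is that you explicitly justify the boundary-trace step (via quasicontinuity, or Sobolev embedding when $p>2$) that the paper asserts in a single line, so your write-up is, if anything, the more careful of the two.
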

\begin{remark}
Using Theorem \ref{LMimpliesWM} and Remark \ref{MimpliesL}, we see that a function that is Mostow Monotone is also Weakly Monotone.
\end{remark}
\begin{proof}
Let $\Omega^\prime\subset\Omega$, then by Definition \ref{LebMono}, $u$ is continuous and $u$ attains its maximum and minimum on $\partial\Omega^\prime$.  Let $m,M$ be a pair so that 
\begin{equation}\label{cutoffs}
(m-u)^+,(u-M)^+\in W^{1,p}_0(\Omega^\prime).
\end{equation}
Since $u$ is continuous so are $(m-u)^+$ and $(u-M)^+$.  Thus, (\ref{cutoffs}) gives us that 
\begin{equation*}
m\leq u\leq M \quad \mbox{ on }\partial\Omega^\prime.
\end{equation*}
Thus, $m\leq\min_{x\in\Omega^\prime}u(x)\leq u\leq \max_{x\in\Omega^\prime}u(x)\leq M$.  Thus, $u$ is weakly monotone.
\end{proof}
\end{section}

%%%%%%%%%%%%%%%%%%%%%%%%%%%%%%%%%%%%%%%%%%%%
% Normal, Cone, K Monotone Section
%%%%%%%%%%%%%%%%%%%%%%%%%%%%%%%%%%%%%%%%%%%%
\begin{section}{Normal Monotone, Cone Monotone, and $K$ Monotone}\label{New}
In this section, we introduce a new definition of monotonicity which we call \emph{Cone monotone}.  We will discuss some variants of this new definition that we call \emph{Normal monotone} and \emph{$K$ monotone}.  We also characterize $K$ monotone functions.

%%%%%%%%%%%%%%%%%%%%%%%%%%%%%%%%%%%%%%%%%%%%%%%
%%% Cone Monotone
%%%%%%%%%%%%%%%%%%%%%%%%%%%%%%%%%%%%%%%%%%%%%%%
\begin{subsection}{Cone Monotone}
Motivated by the notion of monotone operators, we give a more general definition of monotonicity for functions in 2 dimensions.  But first, we define the partial ordering, $\leq_K$ on $\mathbb{R}^2$.
\begin{definition}
Given a convex cone, $K\subset\mathbb{R}^2$ and two points $x,y\in\mathbb{R}^2$, we say that 
\begin{equation}
x\leq_K y\quad\mbox{if}\quad y-x\in K.
\end{equation}
\end{definition}
\begin{definition}
We say a function $f:\Omega\subseteq\mathbb{R}^2\rightarrow\mathbb{R}$ is cone monotone if at each $x\in\Omega$ there exists a cone, $K(x)$, so that 
\begin{equation}
f(x)\leq f(y)\quad\mbox{whenever}\quad x\leq_{K(x)} y.
\end{equation}
We say a function is $K$ monotone if the the function is cone monotone with a fixed cone $K$.
\end{definition}
\end{subsection}
%%%%%%%%%%%%%%%%%%%%%%%%%%%%%%%%%%%%%%%%%%%%%%%
%%%Characterization of Cone Monotone
%%%%%%%%%%%%%%%%%%%%%%%%%%%%%%%%%%%%%%%%%%%%%%%

\begin{subsubsection}{Characterization of Cone Monotone}\label{Characteristics}
 Here we first notice that a function that is $K$ monotone cannot have any local extrema.  This is stated more precisely in the following
 \begin{theorem}\label{nolocextr}
Assume $K$ is a convex cone with non-empty interior.  If $f$ is $K$ monotone then there is no compact connected set $M$ so that $f(M)$ is a local extremum.
%, that is, for every point $x\in M$, $f(x)\geq f(y)$ for all $y\in M_\varepsilon$, a neighborhood of $M$.
 \end{theorem}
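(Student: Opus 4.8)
The plan is to argue by contradiction, using a single interior direction of the cone together with the compactness of $M$. First I would reduce the local-minimum case to the local-maximum case: if $f$ is $K$ monotone then $-f$ is $(-K)$ monotone, the cone $-K$ again has nonempty interior, and a compact connected local minimum of $f$ is exactly a local maximum of $-f$. So it suffices to rule out a local maximum. Assume then, for contradiction, that $M$ is compact and connected, that $f\equiv c$ on $M$, and that there is an open neighborhood $U\supseteq M$ with $f\le c$ throughout $U$.

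Next I would exploit the cone direction. Fix a vector $v\in\operatorname{int}K$, and let $p^\ast\in M$ be a point at which the linear functional $x\mapsto\ip{v}{x}$ attains its maximum over $M$; such a point exists because $M$ is compact. For all sufficiently small $t>0$ the point $p^\ast+tv$ lies in $U$. Since $(p^\ast+tv)-p^\ast=tv\in K$, the $K$ monotonicity of $f$ forces $f(p^\ast+tv)\ge f(p^\ast)=c$, while the local maximum property gives $f(p^\ast+tv)\le c$; hence $f(p^\ast+tv)=c$. On the other hand $\ip{v}{p^\ast+tv}=\ip{v}{p^\ast}+t\|v\|^2>\max_{x\in M}\ip{v}{x}$, so $p^\ast+tv\notin M$. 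Thus pushing in the direction $v$ produces points of the level set $\{f=c\}$ lying strictly beyond $M$ in the $v$ direction; compactness of $M$ is exactly what guarantees that the extreme point $p^\ast$ exists and that the pushed point genuinely escapes $M$.

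It remains to turn this into a contradiction, and this is where the precise meaning of ``$f(M)$ is a local extremum'' has to be nailed down — the main obstacle is definitional rather than geometric. Under the strict reading (that $f<c$ on $U\setminus M$) the contradiction is immediate, since $p^\ast+tv\in U\setminus M$ yet $f(p^\ast+tv)=c$. Under the reading that $M$ is a connected component of its level set, I would instead observe that the segment $\{p^\ast+tv:0\le t\le t_0\}$ lies in $\{f=c\}$ for $t_0$ small, is connected, and meets $M$ at $p^\ast$; gluing it to $M$ yields a connected subset of $\{f=c\}$ strictly larger than $M$, contradicting maximality. The geometric core is the one-line observation that one can always move a little in a cone direction without decreasing $f$, so the real care goes into fixing the definition of a local extremum of a set so that escaping $M$ at the level $c$ is genuinely contradictory. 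In particular the degenerate case $K=\R^2$, in which every $K$ monotone function is constant, must be excluded, exactly as the nonconstancy hypothesis is used in Remark \ref{LebNoLocEx}.
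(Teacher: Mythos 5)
Your proof is correct, and its core is the same as the paper's: both argue by contradiction, moving from a boundary point of $M$ along a cone direction and playing the resulting monotonicity inequality against the extremum property. The differences are in execution, and yours is tighter at exactly the point where the paper is loose. The paper (treating the minimum case directly, using $-K$ rather than your reduction $f \mapsto -f$) picks $x\in\partial M$ with $\{y\in M \mid x\leq_K y\}\neq\emptyset$ and then simply asserts, with reference to a figure, that there exists $\tilde{y}\in B_\varepsilon(x)\setminus M$ with $\tilde{y}-x\in -K$; it never proves such a $\tilde y$ exists, and for a badly chosen boundary point the backward cone near $x$ could lie entirely inside $M$. Your selection of $p^\ast$ as a maximizer of $x\mapsto \ip{v}{x}$ over the compact set $M$, with $v\in\operatorname{int}K$, is precisely what guarantees the pushed point $p^\ast+tv$ escapes $M$, so you close the one genuine gap in the published argument. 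Note also that the paper implicitly adopts the strict reading of ``local extremum'' (it writes $f(x)<f(y)$ for all $y\in B_\varepsilon(x)\setminus M$), so your first case is the one that matches the paper; your component-of-level-set case and the observation that the non-strict reading without maximality makes the statement false (e.g.\ $f(x,y)=g(x)$ with $g$ nondecreasing and flat on an interval, $K$ a closed half-plane) are additional care the paper does not supply, and they show why some such precision is needed rather than optional.
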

 \begin{proof}
 \begin{figure}[htbp]
\centering
\input{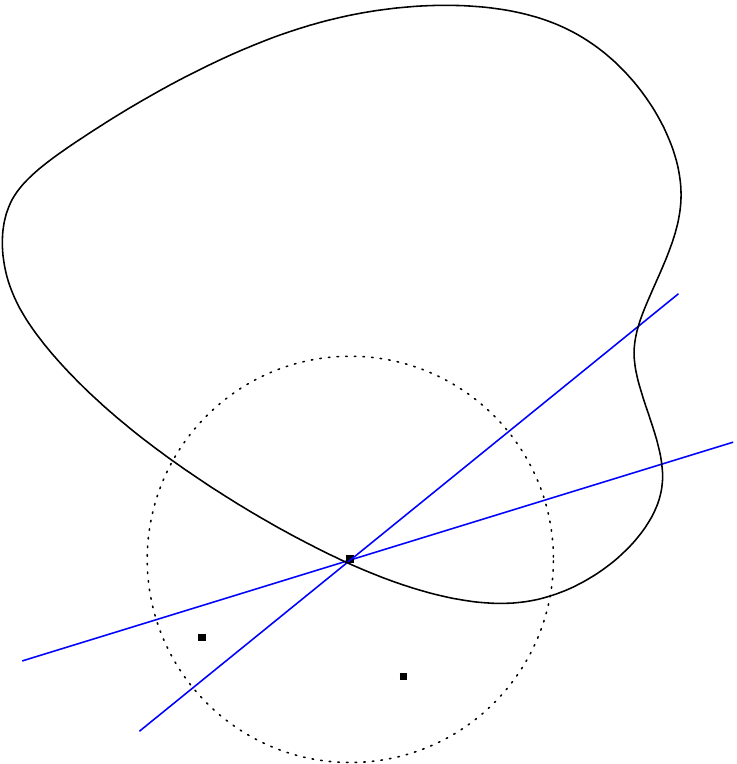_t}
\caption{Cone monotone functions have no local extrema.}\label{nolocmin}
\end{figure}
 
Suppose to the contrary.  That is, suppose that $f(M)$ is a local minimum and suppose $f$ is $K$ monotone.  Then we have for every point $x\in\partial{M}$ and every $y\in B_\varepsilon(x)\setminus M,$ that $f(x)<f(y)$ (see Figure \ref{nolocmin}).

Pick $x\in\partial M$ so that the set $\{y\in M |x\leq_K y\}\neq\emptyset$.  We then consider the cone $- K=\{-x|x\in K\}$.  We know that if $\tilde{y}\in B_\varepsilon(x)\setminus M$ and $\tilde{y}-x\in-K$ then $x-\tilde{y}\in K$ so $f(x)\geq f(\tilde{y})$. Thus, we have a contradiction.
 \end{proof}
 \begin{remark}
Theorem \ref{nolocextr} and Remark \ref{LebNoLocEx} give us that a continuous $K$ monotone function is also Lebesgue monotone.
 \end{remark}
 For the following discussion, we work in the graph space, $\mathbb{R}^{n+1}$ of a $K$ monotone function $f:\mathbb{R}^n\rightarrow\mathbb{R}$.  Assume a fixed closed, convex cone, $K$ with non-empty interior.  Set
 \begin{eqnarray*}
 \overline{K}=&K\times(-\infty,0]\subset\mathbb{R}^{n+1}\\
  \underline{K}=&-K\times[0,\infty)\subset\mathbb{R}^{n+1}.
 \end{eqnarray*}
 Let $\vec{x}$ denote the vector $(x_1,x_2,...,x_n)$.   We can translate these sections up to the graph of $f$ so that it touches at the point $(\vec{x},f(\vec{x}))$.  In doing this we see that we have (see Figure \ref{extendedcones})
 \begin{eqnarray}
 \overline{K}+(\vec{x},f(\vec{x}))\subset\left\{(\vec{x},x_{n+1})|x_{n+1}\leq f(\vec{x})\right\}\nonumber\\
 \underline{K}+(\vec{x},f(\vec{x}))\subset\left\{(\vec{x},x_{n+1})|x_{n+1}\geq f(\vec{x})\right\}.
 \end{eqnarray}
 \begin{figure}[htbp!]
\centering
\input{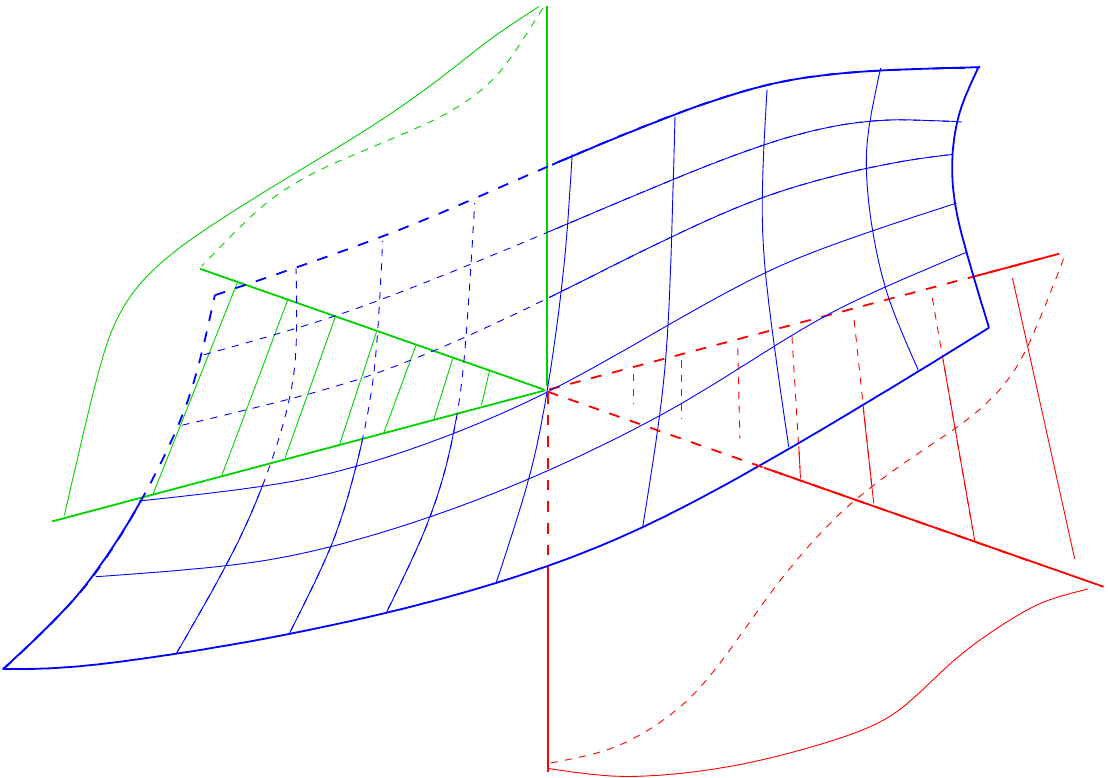_t}
\caption{Example of $\overline{K}+(\vec{x},f(\vec{x}))$ and $\underline{K}+(\vec{x},f(\vec{x}))$.}\label{extendedcones}
\end{figure}
 
 We can do this for each point $(\vec{x},f(\vec{x}))$ on the graph of $f$.  Thus, the boundary of the epigraph and the boundary of the epograph are the same where we touch $\partial\mbox{epi} f$ with a translated $\overline{K}$ and $\underline{K}$. 
So, we can take the union of all such points to get 
 \begin{eqnarray}\label{touchingf}
cl(\mbox{epi} f) =\overline{\bigcup_{\vec{x}\in\mathbb{R}^n}\underline{K}+(\vec{x},f(\vec{x}))}\nonumber\\
cl(\mbox{epo} f)=\overline{\bigcup_{\vec{x}\in\mathbb{R}^n}\overline{K}+(\vec{x},f(\vec{x}))}.
 \end{eqnarray}
 Care needs to be taken in the case when $f$ has a jump discontinuity at $\vec{x}$.  Since for example, for an upper semicontinuous function $\mbox{epi}f$ does not contain points along the vertical section, $\{(\vec{x},r)|r\leq f(\vec{x})\}$, below the point $(\vec{x},f(\vec{x}))$. 
Let 
 \begin{equation}
\mathcal{E}= \bigcup_{\vec{x}\in\mathbb{R}^n}\underline{K}+(\vec{x},f(\vec{x})).
 \end{equation}
Using a limiting argument we notice that indeed this vertical section is contained in $\mathcal{E}$. If $(\vec{x},r)\in\{(\vec{x},r)|r\leq f(\vec{x})\}$, then we can find a sequence of points, $\{ \vec{x}_k\}\subset\mathbb{R}^n$ so that $\vec{x}_k\rightarrow\vec{x}$.  Thus, for $k$ large enough, $|(\vec{x}_k,r)-(\vec{x},r)|$ is small.  Thus, $cl(\mathcal{E})=cl(\mbox{epi}f)$.  A similar argument can be used to give the second equation in (\ref{touchingf}) for $f$ lower semicontinuous.  Using these two results, we get that (\ref{touchingf}) holds for any function $f$.
 
 \begin{figure}[htbp!]
 \centering
 \input{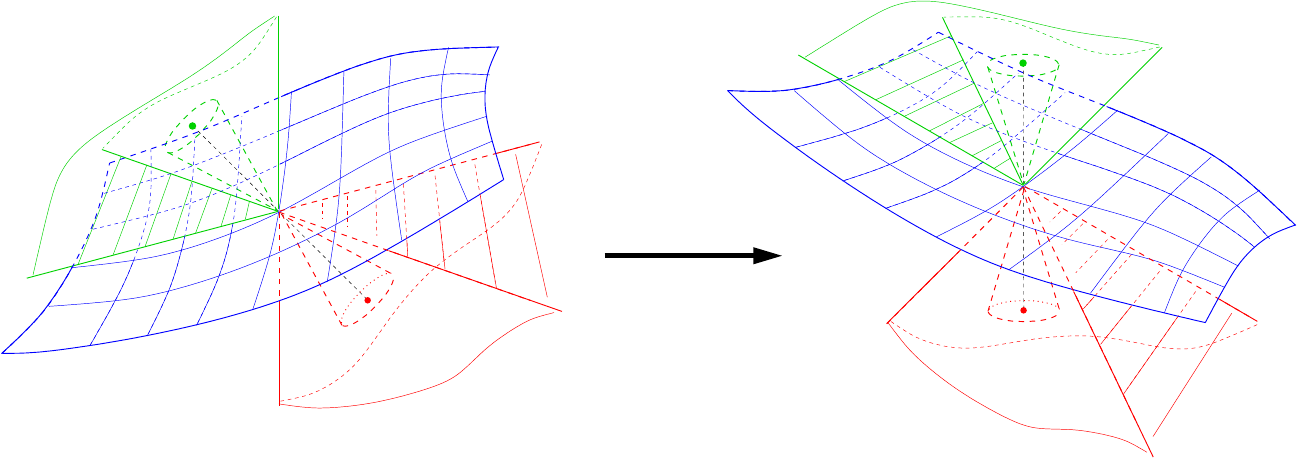_t}
 \caption{Rotating the graph of $f$ so that the line segment from $y$ to $\hat{x}$ becomes vertical}\label{rotating}
 \end{figure}
 Picking $\hat{x}\in \underline{K}$ so that $B_\delta(\hat{x})\subset \underline{K}$ and rotating so that $\hat{x}$ becomes vertical (see Figure \ref{rotating}), the piece of $\partial\mbox{epi} f$ in any $B_\delta(y)$, $y\in\mbox{epi}f$ will be a Lipschitz graph with Lipschitz constant no more than $\left(\sqrt{||\hat{x}||^2+\delta^2}\right)/\delta$. This implies that $\mu(\partial\mbox{epi f})<\infty$ in any ball, that is, for $y\in\partial\mbox{epi}$, $\mu(\partial\mbox{epi} f\cap B(y,R))<\infty$ for any $R$.
 
\begin{theorem}\label{BVtheorem}
If $f$ is $K$ monotone and bounded, and $K$ has non-empty interior then $f\in BV$.
\end{theorem}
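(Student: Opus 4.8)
The plan is to combine the geometric structure of $\mbox{epi} f$ obtained above with the classical correspondence between sets of finite perimeter and functions of bounded variation. Since the total variation of a bounded $K$ monotone function over all of $\mathbb{R}^n$ is in general infinite (already for a bounded monotone function of a single coordinate), the natural reading of the conclusion is that $f\in BV(\Omega)$ for every bounded subdomain $\Omega$, which is what I will establish.

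First I would record that the discussion preceding the statement already exhibits $\mbox{epi} f$ as a set of locally finite perimeter. Indeed, near each of its points $\partial\mbox{epi} f$ is the graph of a Lipschitz function with Lipschitz constant at most $\left(\sqrt{||\hat{x}||^2+\delta^2}\right)/\delta$, so it is countably $n$-rectifiable and the reduced boundary agrees with the topological boundary up to a $\mu$-null set; hence $P(\mbox{epi} f; B(y,R)) = \mu(\partial\mbox{epi} f\cap B(y,R))<\infty$ for every ball. Using boundedness, $|f|\leq C$, the part of $\partial\mbox{epi} f$ lying over a bounded domain $\Omega$ is contained in the bounded slab $\overline{\Omega}\times[-C,C]$; choosing $R$ so that $B(y,R)$ engulfs this slab upgrades the local bound to $\mu\big(\partial\mbox{epi} f\cap(\Omega\times\mathbb{R})\big)<\infty$, i.e. $\mbox{epi} f$ has finite perimeter in the cylinder $\Omega\times\mathbb{R}$.

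Next I would transfer finite perimeter of the epigraph to bounded variation of $f$. Writing the measure-theoretic outer unit normal to $\partial^{*}\mbox{epi} f$ as $\nu=(\nu_{\vec{x}},\nu_{n+1})\in\mathbb{R}^{n}\times\mathbb{R}$, the horizontal components of the Gauss--Green (generalized divergence) formula for the set of finite perimeter $\mbox{epi} f$ identify the distributional gradient $Df$ with the projection onto $\Omega$ of the $\mathbb{R}^n$-valued measure $\nu_{\vec{x}}\,\mu$ restricted to $\partial^{*}\mbox{epi} f$. Since $|\nu_{\vec{x}}|\leq|\nu|=1$, this yields
\begin{equation*}
|Df|(\Omega)=\int_{\partial^{*}\mbox{epi} f\cap(\Omega\times\mathbb{R})}|\nu_{\vec{x}}|\,d\mu\;\leq\;\mu\big(\partial\mbox{epi} f\cap(\Omega\times\mathbb{R})\big)<\infty.
\end{equation*}
As $f$ is bounded on the bounded set $\Omega$ it lies in $L^{1}(\Omega)$, so the displayed estimate gives $f\in BV(\Omega)$; letting $\Omega$ exhaust $\mathbb{R}^n$ gives $f\in BV_{\loc}$.

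The main obstacle is the rigorous justification of this last identification---that finite $\mu$-measure of $\partial\mbox{epi} f$ really controls $|Df|$. I would either invoke the subgraph characterization of $BV$ due to Miranda and Giusti directly, or, to keep the argument self-contained, derive the inequality by testing $\div{g}$ against $f$ for $g\in C_c^1(\Omega;\mathbb{R}^n)$ with $|g|\leq1$ and applying the divergence theorem on $\mbox{epi} f$ to the horizontal field $(g,0)$ equipped with a vertical cutoff supported near the slab $\overline{\Omega}\times[-C,C]$. A secondary point needing care is the effect of jump discontinuities of $f$: the limiting argument given earlier (the one showing $cl(\mathcal{E})=cl(\mbox{epi}f)$ by filling in the vertical sections) guarantees that $\partial\mbox{epi} f$ coincides with the closure assembled from the translated cones, so the uniform Lipschitz-graph bound persists across jumps and the jump set contributes only its finite $\mu$-measure to $|Df|$.
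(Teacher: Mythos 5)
Your argument is correct in substance, but it reaches the conclusion by a genuinely different route than the paper. Both proofs start from the same geometric input, namely the tipped-Lipschitz-graph structure of $\partial\mbox{epi}f$ established just before the theorem, which gives $\mu(\partial\mbox{epi}f\cap B(y,R))<\infty$. The paper then slices \emph{horizontally}: by the slicing theorem, $\int_{-\infty}^{\infty}\mathcal{H}^{n-1}\bigl(\partial\{f\geq t\}\bigr)\,dt$ is controlled by $\mu(\partial\mbox{epo}f)$, since $\partial\{f\geq t\}$ is precisely the height-$t$ slice of $\partial\mbox{epo}f$; the coarea formula for $BV$ functions, $|Df|(\Omega)=\int_{-\infty}^{\infty}P(\{f>t\};\Omega)\,dt$, then converts the finiteness of this integral into $f\in BV$. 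You instead project \emph{vertically}: you use Federer's criterion to see that $\mbox{epi}f$ has (locally) finite perimeter, and then the Miranda--Giusti subgraph characterization of $BV$, i.e.\ the Gauss--Green identification of $Df$ with the horizontal component $\nu_{\vec{x}}\,\mu\restriction\partial^{*}\mbox{epi}f$, to get $|Df|(\Omega)\leq\mu\bigl(\partial\mbox{epi}f\cap(\Omega\times\mathbb{R})\bigr)$. The two transfer principles are different standard tools; yours yields the quantitative bound on $|Df|$ in one step, while the paper's factors through the perimeters of the superlevel sets, which dovetails with the coarea-based total variation bound computed in the following subsection. One point where your write-up is more careful than the paper's: you observe that a bounded $K$ monotone function on all of $\mathbb{R}^n$ generally has infinite global variation (e.g.\ $f(x,y)=\arctan x$), so the conclusion must be read as $BV$ on bounded domains, i.e.\ $BV_{\loc}$. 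The paper's proof asserts $\mu(\partial\mbox{epo}f)<\infty$ without qualification, which taken literally fails for such examples; only the local bound is actually available, and your restriction to the cylinder $\Omega\times\mathbb{R}$ over a bounded $\Omega$ is the correct repair.
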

\begin{proof}
First, the slicing theorem from \cite{KrantzParks} gives us that 
\begin{equation}
\int_{-\infty}^\infty(\partial\mbox{epo}f)_t\;dt<\mu(\partial\mbox{epo}f), 
\end{equation}
where $(\partial\mbox{epo}f)_t=\partial\{x|f(x)\geq t\}$.
So we have that 
\begin{equation}\label{coareaint}
\int_{-\infty}^\infty(\partial\mbox{epo}f)_t\;dt<\infty.
\end{equation}
Using the coarea formula for BV functions from \cite{EvansGariepy}, we get that (\ref{coareaint}) implies that $f\in BV$.
\end{proof}
\end{subsubsection}

%%%%%%%%%%%%%%%%%%%%%%%%%%%%%%%%%%%%%%%%%%%%%%%
%%%Examples of cone monotone
%%%%%%%%%%%%%%%%%%%%%%%%%%%%%%%%%%%%%%%%%%%%%%%

\begin{subsubsection}{Examples of Cone Monotone Functions}
We now consider some examples of $K$ monotone functions.  
 
Suppose $K$ is a ray so that $K$ has empty interior. Then for $f$ to be $K$ monotone all we need is for $f$ monotone on all lines parallel to $K$, that is monotone in the positive direction of $K$.  Therefore, $f$ need not even be measurable.
%\begin{figure}[htbp!]
%\includegraphics[width=3in]{randfct2.png}
%\caption{$f(x,y)=rand(y)$}\label{randfct} 
%\end{figure}
\begin{exm}\label{RandConeMono}
Let $f(\cdot,y)=rand(y)$,
%(see Figure \ref{randfct})
where $rand(y)$ assigns a particular random number to each value $y$. This function need not be measurable, but is $K$ monotone with $K=\left\{\alpha\left[\begin{array}{c} 1\\ 0\end{array}\right]|\alpha>0\right\}$.
\end{exm}

\begin{figure}[htbp]
\centering
\includegraphics[width=2in]{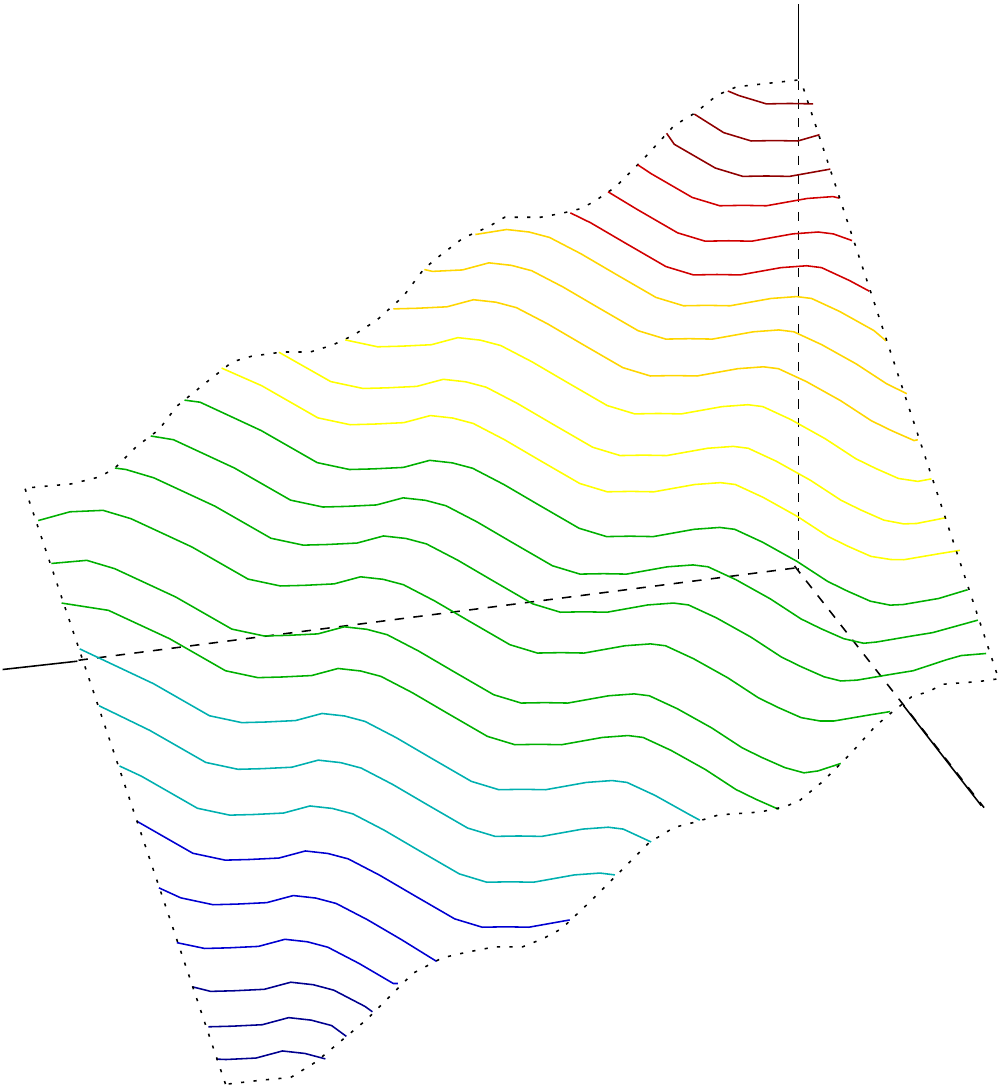}
\caption{$f(x,y)=\sin(x)+x+y$.}\label{oscCM}
\end{figure}
 \begin{exm}\label{OscConeMono}
An example of a $K$ monotone function with the cone, $K$ having nonempty interior is a function that oscillates, but is sloped upward (see Figure \ref{oscCM}).  More specifically, the function $f(x,y)=\sin(x)+x+y$ is $K$ monotone.  We can see this by noticing that $f$ is increasing in the cone
 $K=\{(v_1,v_2)| v_1>0, v_2>0\}$.
\end{exm}

\begin{remark}Notice in this example that $f$ has oscillatory behavior.  Yet, $f$ is still cone monotone. 
Notice also that if an oscillating function is tipped enough the result is $K$ monotone.  The more tipped the more oscillations possible and still be able to maintain $K$ monotonicity.
\end{remark}

\begin{exm}\label{ConeNotVG} Some cone monotone functions are monotone in no other sense.  An example of a function, $f:\mathbb{R}^2\rightarrow\mathbb{R}$, that is Cone monotone, but not Vodopyanov Goldstein monotone is a function whose graph is a paraboloid.  At each point $x$, that is not the vertex of the paraboloid, we find the normal to the level set $\{f=f(x)\}$.  We see the half space determined by this  normal is a cone in which $f$ increases from $f(x)$.  At the vertex of the paraboloid, we see that all of $\mathbb{R}^2$ is the cone in which $f$ increases.
\end{exm}

\begin{exm}\label{VGNotCone} Not all Vodopyanov-Goldstein Monotone functions are cone monotone.  An example of a function that is Vodopyanov-Goldstein Monotone, but is not cone monotone can be constructed with inspiration from Example \ref{VGNotL}.  Level sets of this function are drawn in Figure \ref{VGnotConeFig}.  Here we see that the darkest blue level (minimum) set turns too much to be Cone monotone.  We see this at the point $y$.  At this point, there is no cone so that all points inside the cone have function value larger than $f(y)$ since any cone will cross the dark blue level set at another point.
\end{exm}
\begin{figure}[htbp!]
\centering
\input{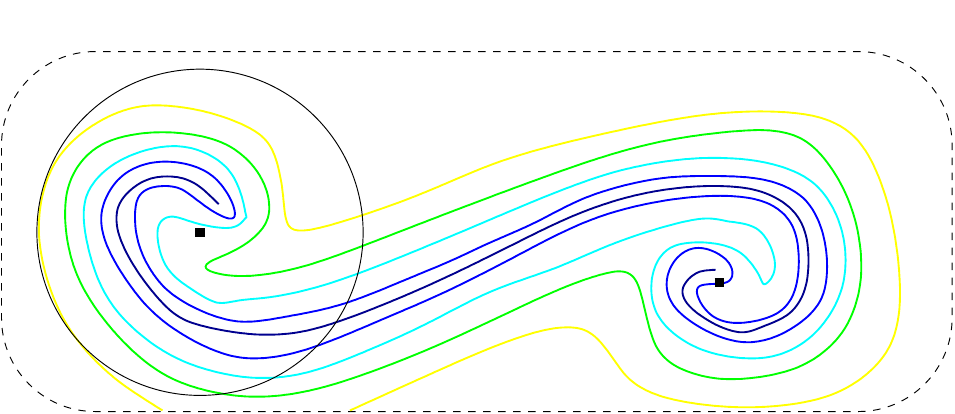_t}
\caption{A function that is Vodopyanov-Goldstein monotone, but is not Cone monotone.}\label{VGnotConeFig}
\end{figure}

\begin{exm}\label{LebNotCone}
We can create a function, $f$ that is Lebesgue Monotone, but is not Cone monotone.  In this case, we need a level set that turns too much, but the level sets extend to the boundary of $\Omega$.  We see such a function in Figure \ref{LebNotConeFig}.  Let dark blue represent a minimum.  Then at the point $y$, there is no cone that so that every point in the cone has function value larger than $f(y)$ since every cone will cross the dark blue level set.
\end{exm}
\begin{figure}[htbp!]
\centering
\input{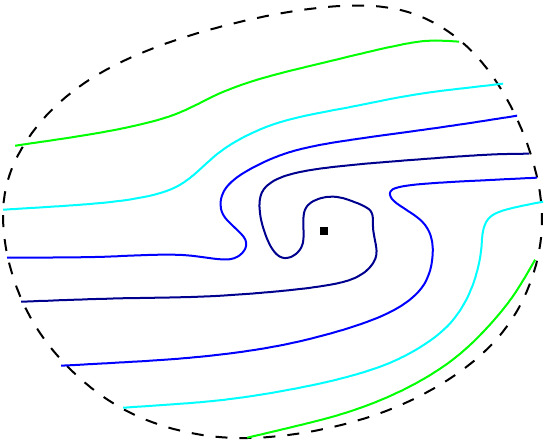_t}
\caption{A function that is Lebesgue monotone, but is not Cone monotone.}\label{LebNotConeFig}
\end{figure}

Now if the domain has dimension higher than 2 and $K$ is convex and has empty interior, but is not just a ray, then we can look at slices of the domain (see Figure \ref{NonemptyInterior}).  We can see that on each slice of the domain, the function still satisfies Theorems \ref{nolocextr} and \ref{BVtheorem}.  But, we also see that the behavior of the function from slice to slice is independent. This is the same behavior as we see when the function is defined on a 2-dimensional domain and $K$ is a ray.  That is, from line to line, the function behavior is independent (see Example \ref{RandConeMono}). We can also see an example of the extended cones for a $K$ monotone function where $K$ is a ray, in Figure \ref{poscodim2}.
\begin{figure}[htbp!]
\includegraphics[width=3.5in]{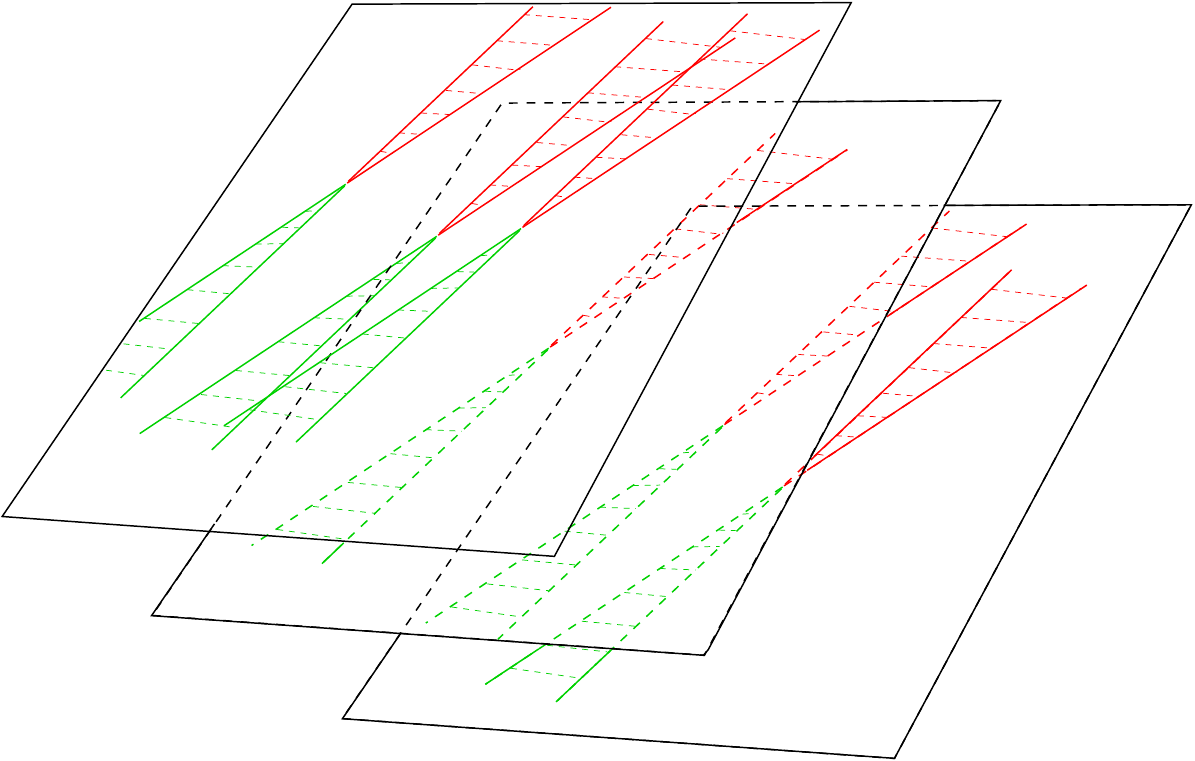}
\caption{Cones with empty interior in a 3D domain that are not just a ray.}\label{NonemptyInterior}
\end{figure}

\begin{figure}[htbp!]
\input{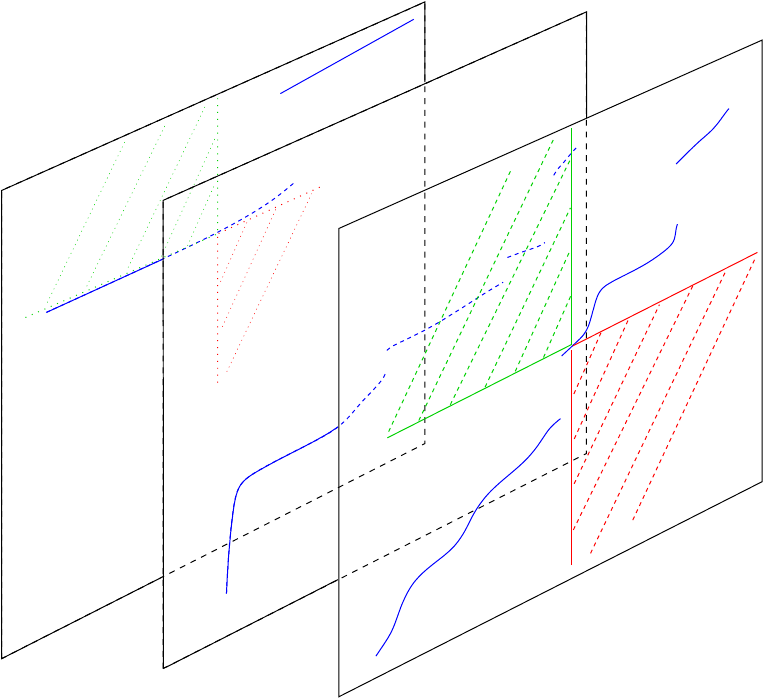_t}
\caption{The extended cones are
shown pinching the graphs of the functions, shown in blue.
The key point is that the blue curve in each leaf of
the foliation is independent of every other graph.}\label{poscodim2}
\end{figure}

If $K$ is a closed half space then $f$ has level sets that are hyperplanes parallel to $\partial K$ and $f$ is one dimensional monotone.
\end{subsubsection}
%%%%%%%%%%%%%%%%%%%%%%%%%%%%%%%%%%%%%%%%%%%%%%%
%%%Construction of K monotone functions
%%%%%%%%%%%%%%%%%%%%%%%%%%%%%%%%%%%%%%%%%%%%%%%
\begin{subsubsection}{Construction of $K$ monotone functions}\label{construction}
Recall from (\ref{touchingf}) that if $f$ is $K$ monotone, we have
\begin{eqnarray}\label{closureEpi}
cl(\mbox{epi}f)=\overline{\bigcup_{\vec{x}\in\mathbb{R}^n}(\vec{x},f(\vec{x}))+\underline{K}}\nonumber\\
cl(\mbox{epo}f)=\overline{\bigcup_{\vec{x}\in\mathbb{R}^n}(\vec{x},f(\vec{x}))+\overline{K}}
\end{eqnarray}

We can also construct a $K$ monotone function by taking arbitrary unions of the sets $(\vec{x},x_{n+1})+\underline{K}$.  By construction the boundary of this set is then the graph of the epigraph (and of the epograph) of a $K$ monotone function.
\end{subsubsection}
%%%%%%%%%%%%%%%%%%%%%%%%%%%%%%%%%%%%%%%%%%%%%%%
%%%Bounds on TV Norm
%%%%%%%%%%%%%%%%%%%%%%%%%%%%%%%%%%%%%%%%%%%%%%%

\begin{subsubsection}{Bounds on TV Norm}
In this section, we find a bound on the total variation of $K$ monotone functions.  To do this we use the idea of a tipped graph introduced in Subsection \ref{Characteristics}.

Suppose $f<C$ on $\mathbb{R}^n$.  Then $f\left|_{B(0,R)\subset\mathbb{R}^n}\right.$ has a graph that is 
contained in $B\left(0,\sqrt{R^2+C^2}\right)\subset\mathbb{R}^{n+1}$.  Assuming that the tipped Lipschitz constant 
is $L \left(\leq\frac{\sqrt{||\vec{x}||^2+\delta^2}}{\delta}\right)$, we get that the amount of $\left.\partial\mbox{epi} f\right|_{f|_{B(0,R)}}$ in $B\left(0,\sqrt{R^2+C^2}\right)$ is bounded above by $\alpha(n)\left(\sqrt{R^2+C^2}\right)^n\sqrt{1+L^2}$, where $\alpha(n)$ is the volume of the $n$ dimensional unit ball.

Using the coarea formula discussed above, we get an upper bound on the total variation of a function that is $K$ monotone as follows.
\begin{equation}
TV_{B(0,R)}(f)=\int_{B(0,R)}|\nabla u|\;dx\leq\mu(\partial\mbox{epi}(f(B(0,R))))\leq\alpha(n)\left(\sqrt{R^2+C^2}\right)^n\sqrt{1+L^2}.
\end{equation}
%**I need more discussion here**
%Discuss the whole integral variation...K talked about this

\end{subsubsection}

%%%%%%%%%%%%%%%%%%%%%%%%%%%%%%%%%%%%%%%%%%%%%%%%%
%% Normal Monotone
%%%%%%%%%%%%%%%%%%%%%%%%%%%%%%%%%%%%%%%%%%%%%%%%%

\begin{subsection}{Normal Monotone}
Motivated by the nondecreasing (or nonincreasing) behavior of monotone functions with domain in $\mathbb{R}$, we introduce a specific case of cone monotone.  We consider a notion of monotonicity for functions whose domain is in $\Omega\subset\mathbb{R}^2$  by requiring that a monotone function be nondecreasing (or nonincreasing) in a direction normal to the level sets of the function.

First, we introduce a few definitions.
\begin{definition}
We say that a vector $v$ is tangent to a set $X$ at a point $x\in X$ if there is a sequence $\{x_k\}\subset X$ with $x_k\rightarrow x$ and a sequence $\{t_k\}\subset \mathbb{R}$ with  $t_k\searrow 0$ so that 
\begin{equation}
\lim_{k\rightarrow\infty}\frac{x_k-x}{t_k}=v.
\end{equation}
The set of all tangents to the set $X$ at the point $x\in X$ is the tangent cone and denote it by $T_X(x)$.
\end{definition} 
\begin{definition}
We say that a vector $n(x)$ is normal to a set $X$ at a point $x\in X$ if for every vector $v\in T_X(x)$ we have that $n(x)\cdot v\leq 0$.
\end{definition} 
\begin{definition}\label{NormalMono}
We say that a function $f :\mathbb{R}^2\rightarrow\mathbb{R}$ is (strictly) Normal monotone if for every $c\in\mathbb{R}$ and every $x$ on the boundary of the level set $\{f =c\}$ the 1-dimensional functions $\gamma\mapsto f (x+\gamma n(x))$ are (strictly) monotone for every vector, $n(x)$, normal to the level set $\{f =c\}$ at $x$.
\end{definition}
\begin{remark}
The definition for normal monotone requires
 that the function be monotone along the entire
 intersection of a one dimensional line and the the
 domain of f. In the case of cone monotone, we require
 only monotonicity in the direction of the positive cone
 while in the case of K monotone, the fact that we can look
 forwards and backwards to get non-decreasing and non-increasing
 behavior follows from the invariance of K, not the definition
 of cone monotone.
\end{remark}
\begin{remark}
Notice also that Example \ref{OscConeMono} is not normal monotone.
\end{remark}
%Now, suppose $K$ is a convex cone with nonempty interior.  Then 
%$f$ is $K$ monotone and bounded on compact sets implies that the graph of $f$ is a tipped Lipschitz graph which implies that $ f\in BV$.

\begin{remark} A smooth function that is normal monotone is cone monotone for any cone valued function $K(x)\subset N(x)\;\forall x$.
\end{remark}

We now explore this definition with a few examples.  
\begin{exm}
One can easily verify that a function whose graph is a non-horizontal plane is strictly normal monotone.  This is desirable since a 1D function whose graph is a line is strictly monotone (assuming it is not constant).  
\end{exm}

\begin{exm}
A function whose graph is a parabola is not monotone in 1D so neither should a paraboloid be normal monotone.  One can easily verify this to be the case.
\end{exm}
%\begin{figure}[!htbp]
%\centering
%\includegraphics{paraboloid.pdf}
%\caption{$f(x,y)=x^2+y^2$}\label{parabMono}
%\end{figure}

\begin{exm}\label{LebNotKNotNormal}
If we extend a nonmonotone 1D function to 2D, we should get a function that is not normal monotone. An example of such a function is the function $f(x,y)=x^3-x$.  Notice, this function is Lebesgue monotone, but neither $K$ nor Normal monotone.
\end{exm}

\begin{exm}\label{KNotNormal}
In Figure \ref{KNotNormalFig}, we show a function whose level sets are very oscillatory so that it is not normal monotone, while still being $K$ monotone.
\end{exm}
\begin{figure}[!htbp]
\centering
\input{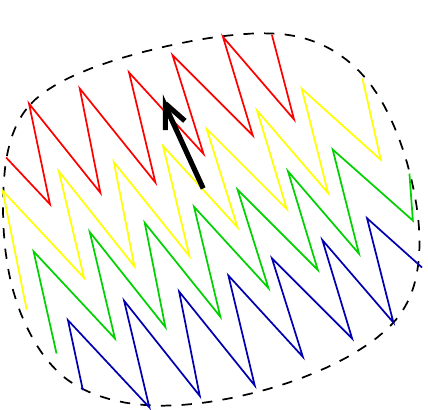_t}
\caption{A function that is $K$ monotone, but not Normal monotone.}\label{KNotNormalFig}
\end{figure}

\begin{exm}\label{NormalNotK}
In Figure \ref{NormalNotKFig}, we see that if $\Omega$ is not convex, then we can construct a function that is not $K$ monotone, but is Normal monotone.  In this example, the function increases in a counter clockwise direction.  This function is Normal monotone. We can see that it is not $K$ monotone since at the point $x$ any direction pointing to the north and west of the level line is a direction of ascent.  But, at the point $y$, these directions are directions of descent.  So the only cone of ascent at both $x$ and $y$ must be along the line parallel to their level curves.  But, we see that at other points, this is not a cone of ascent.  Thus, this function cannot be $K$ monotone.
\end{exm}
\begin{figure}[!htbp]
\centering
\input{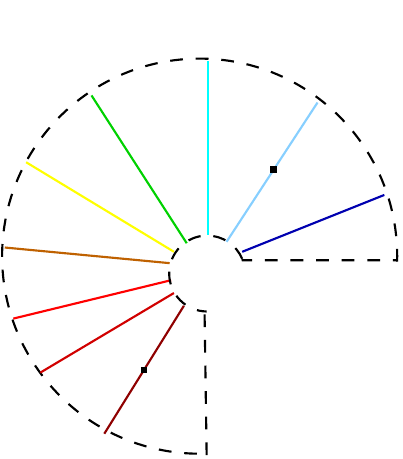_t}
\caption{A function that is not $K$ monotone, but is Normal monotone.}\label{NormalNotKFig}
\end{figure}

The next theorem tells us that a normal monotone function is also Lebesgue monotone.
%%%%%%%%%%%%%%%%%%%%%%%%%%%%%%%%%%%%%%%%%%%%
% Continuous and Normal Monotone ===> L-Monotone
%%%%%%%%%%%%%%%%%%%%%%%%%%%%%%%%%%%%%%%%%%%%
\begin{theorem}\label{NMimpliesLM}
Let $\Omega\subset\mathbb{R}^2$ be a bounded domain and let $f:\Omega\rightarrow\mathbb{R}$ be a continuous, normal monotone function then $f$ is also Lebesgue monotone.
\end{theorem}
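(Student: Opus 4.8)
The plan is to argue by contradiction, producing a single point and a single normal direction at which the one-dimensional restriction of $f$ fails to be monotone, contradicting Definition \ref{NormalMono}. First I would reduce to a convenient failure of Lebesgue monotonicity. Suppose $f$ is continuous and normal monotone but not Lebesgue monotone. By Definition \ref{LebMono} there is a compact domain $\Omega'\subseteq\Omega$ on which $f$ does not attain one of its extreme values on $\partial\Omega'$. Replacing $f$ by $-f$ if necessary — which is again normal monotone, since negation preserves monotonicity of each one-dimensional slice and leaves the level sets and their normal cones unchanged — I may assume it is the minimum that fails. Set $c=\min_{\Omega'}f$; then $c<\min_{\partial\Omega'}f$, so the compact set $M=\{x\in\Omega':f(x)=c\}$ satisfies $M\subset\operatorname{int}\Omega'$, with $f=c$ on $M$ and $f>c$ on $\operatorname{int}\Omega'\setminus M$.

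Next I would manufacture a supporting normal from an extreme point of $M$. Fix any unit vector $u$ and let $x^*\in M$ maximize $x\mapsto u\cdot x$ over the compact set $M$. Then $M$ lies in the half-plane $\{x : u\cdot x\le u\cdot x^*\}$, so every tangent vector $v\in T_M(x^*)$ satisfies $u\cdot v\le 0$. Since $x^*\in\operatorname{int}\Omega'$, on a small ball $B(x^*,\rho)\subset\operatorname{int}\Omega'$ the level set $\{f=c\}$ coincides with $M$; hence the tangent cone of $\{f=c\}$ at $x^*$ equals $T_M(x^*)$, the point $x^*$ is a boundary point of $\{f=c\}$, and $u$ qualifies as a vector normal to $\{f=c\}$ at $x^*$ in the sense of the normal-cone definition. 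This is the crux of the argument and the step I expect to be most delicate: I must be sure the extreme point of $M$ is genuinely a boundary point of the \emph{full} level set and that $u$ is a normal \emph{there}, which is exactly what the local coincidence $\{f=c\}\cap B(x^*,\rho)=M\cap B(x^*,\rho)$ supplies.

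Finally I would extract the contradiction from normal monotonicity. Consider $g(\gamma)=f(x^*+\gamma u)$. For small $\gamma>0$ the point $x^*+\gamma u$ lies strictly beyond the supporting line (hence off $M$) but still in $\operatorname{int}\Omega'$, so $g(\gamma)>c=g(0)$; thus if $g$ were monotone it would have to be nondecreasing, forcing $g(\gamma)\le c$ for $\gamma<0$ and hence, since $c$ is the global minimum of $f$ over $\Omega'$, forcing $g\equiv c$ at every $\gamma<0$ with $x^*+\gamma u\in\Omega'$. Now let $s_0=\max\{s\ge 0 : x^*-su\in M\}$, which is finite by compactness of $M$; the point $q_0=x^*-s_0u$ lies in $\operatorname{int}\Omega'$, so for $s$ slightly larger than $s_0$ the point $x^*-su$ is still in $\operatorname{int}\Omega'$ yet off $M$, whence $f(x^*-su)>c$, contradicting $g\equiv c$ there. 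Therefore $g$ is not monotone along the normal $u$ at the boundary point $x^*$ of $\{f=c\}$, contradicting Definition \ref{NormalMono}. Hence $f$ is Lebesgue monotone.
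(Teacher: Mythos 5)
Your proof is correct and takes essentially the same route as the paper's: pass to the set $M$ of minimizers of a violating subdomain $\Omega'$, pick a boundary point of $M$ and a normal direction there, and show the one-dimensional restriction of $f$ along that normal cannot be monotone. In fact your version is more careful than the paper's own terse argument: your supporting-line choice of $x^*$ guarantees a \emph{nonzero} normal (an arbitrary point of $\partial M$ may admit only the zero vector as a normal), and your observation that $\{f=c\}\cap B(x^*,\rho)=M\cap B(x^*,\rho)$ justifies that $u$ is normal to the full level set rather than merely to $M$ --- points the paper glosses over.
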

\begin{proof}  We prove the contrapositive.  Suppose $f$ is not Lebesgue Monotone.  Then there exists a set $\Omega^\prime$ so that
\begin{equation*}
\inf_{x\in\Omega^\prime}f(x)<\inf_{x\in\partial\Omega^\prime}f(x).
\end{equation*}
We want to show that $f$ is not normal monotone.
Let us then define the nonempty set $M\subset\Omega^\prime$ to be the set  where $f$ attains a local  minimum, at every point in $M$.  That is,
\begin{equation*}
M=\left\{x\in\Omega^\prime|f(x)=\inf_{x\in\Omega^\prime}f(x)\right\}.
\end{equation*}
Let $(x_0,y_0)\in\partial M$ and let $n(x_0,y_0)$ be a normal at  $(x_0,y_0)$ to $M$.  We know then that $\gamma\mapsto f((x_0,y_0)+\gamma n(x_0,y_0))$ is not monotone since $f$ has a local minimum on $M$.  Thus $f$ is not normal monotone.
\end{proof}
\begin{remark}
This theorem gives us that a function that is normal monotone is also weakly monotone.
\end{remark}
\end{subsection}

\end{section}
%%%%%%%%%%%%%%%%%%%%%%%%%%%%%%%%%%%%%%%%%%%%
% Conclusion
%%%%%%%%%%%%%%%%%%%%%%%%%%%%%%%%%%%%%%%%%%%%
\begin{section}{Conclusion}
In this paper we explored current and new definitions of monotone.  For continuous functions, we compared several definitions of monotonicity, in higher dimensions. How these sets of functions are related is represented in the following Venn diagram.
\begin{figure}[htbp!]
\centering
\input{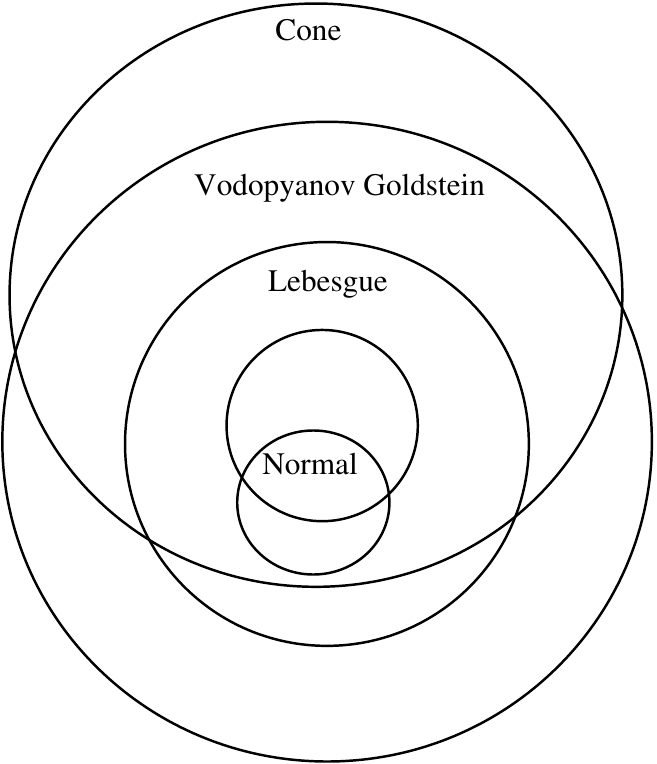_t}
\caption{Types of monotonicity in higher dimensions and how they compare for a continuous function.}
\end{figure}
\end{section}

We also showed how to construct $K$ monotone functions.  We show that bounded $K$ monotone functions are BV and we find a bound on the total variation of these functions.
%%%%%%%% Bibliography %%%%%%%%

\bibliographystyle{plain}
\bibliography{MonotoneComp.bib}

Addendum: After the publication of this article
in Abstract and Applied Analysis we discovered a
1993 paper by Clarke et al. that contained a couple
of our results. The full reference is:
\emph{Subgradient criteria for monotonicity, the
Lipschitz condition, and convexity} by FH Clarke, RJ Stern,
and PR Wolenski, in Canadian Journal of Mathematics, Volume 45
number 6, 1993
\end{document}